\newtheorem{theorem}{Theorem}[section]
\newtheorem{lemma}[theorem]{Lemma}
\theoremstyle{definition}
\newtheorem{definition}[theorem]{Definition}
\newtheorem{corollary}[theorem]{Corollary}
\theoremstyle{remark}
\newtheorem{remark}[theorem]{Remark}
\numberwithin{equation}{section}
\newcommand{\ud}{\mathrm{d}}
\newcommand{\tv}{\mathrm{{\bf{d}}}_{\mathrm{TV}}}
\newcommand{\N}{\mathbb{N}}
\newcommand{\E}{\mathbb{E}}
\newcommand{\Z}{\mathcal{Z}}
\newcommand{\dd}{\textrm{{{d}}}}
\newcommand{\D}{\textrm{{{D}}}}
\newcommand{\oo}{\mathrm{o}}
\newcommand{\ee}{\stackrel{\mathcal{D}}{=}}
\begin{document}
\title{Cut-off phenomenon for the maximum of a sampling of Ornstein-Uhlenbeck processes}
\author{G. Barrera}
\address{University of Helsinki, Department of Mathematical and Statistical Sciences. 
Exactum in Kumpula Campus, Pietari Kalmin katu 5. Postal Code: 00560. 
Helsinki, Finland.}
\email{gerardo.barreravargas@helsinki.fi}
\thanks{The author was supported by grant from Pacific Institute for the Mathematical Sciences, PIMS}
\keywords{Cut-off Phenomenon; Extreme Value Distributions;  Stable Distribution;  Total Variation Distance}
%\amssubj{60G15; 60G52; 60G70; 62E20}
\begin{abstract}
In this article we study the 
so-called cut-off phenomenon in the total variation distance when $n\to \infty$ for the family of continuous-time stochastic processes indexed by $n\in \mathbb{N}$,
\[
\left(
\Z^{(n)}_t=
\max\limits_{j\in \{1,\ldots,n\}}{X^{(j)}_t}:t\geq 0\right),
\]
where $X^{(1)},\ldots,X^{(n)}$ is 
a sampling of  $n$ ergodic Ornstein-Uhlenbeck processes driven by stable processes of index $\alpha$.
It is not hard to see that
for each $n\in \mathbb{N}$,  $\Z^{(n)}_t$ converges in the total variation distance to a limiting distribution $\Z^{(n)}_\infty$ as $t$ goes by. 
Using the asymptotic theory of extremes;
in the Gaussian case
we prove that the total variation distance between the distribution of $\Z^{(n)}_t$ and its limiting distribution $\Z^{(n)}_\infty$  converges to a universal function
in a constant time window around the cut-off time, a fact known as profile cut-off in the context of stochastic processes.
On the other hand, in the heavy-tailed case  we prove that there is not cut-off.
\end{abstract}

\maketitle
\markboth{Cut-off phenomenon for the maximum of a sampling of O.U.P.}{Cut-off phenomenon for the maximum of a sampling of O.U.P.}

\section*{Introduction}
The Ornstein-Uhlenbeck process is a mathematical model that provides accurate representations of many real dynamic processes in systems in a stationary state. When it is applied to the description of random motion of particles such as Brownian particles or L\'evy flights, it provides exact predictions coinciding with those of the Langevin equation but not restricted to systems in thermal equilibrium but only conditioned to be stationary, for further details see \cite{DON}.

The aim of this article is the study of the convergence to its limiting distribution for the maximum of a sampling of $n$ ergodic Ornstein-Uhlenbeck processes (O.U.P. for shorthand) driven by $\alpha$-stable processes. O.U.P. are one of the simplest examples of stochastic processes where almost all computations can be done explicitly. 
O.U.P driven by Brownian motion appear as the solution of the so-called Langevin equation, a stochastic
 differential equation that models the movement of a Brownian particle in a viscous fluid. Similarly to the Gaussian case, O.U.P. driven by $\alpha$-stable processes have been extensively
studied since  
they appear in many areas of applied mathematics. 
They 
appear as a continuous time generalization of random recurrence equations, as
shown by de Haan and Karandikar \cite{HKA} and have applications in mathematical finance
(see for instance Kl\"uppelberg et al. \cite{KLM}), risk theory (see for instance
Gjessing and Paulsen \cite{GPA}) and mathematical physics (see for instance Garbaczewski and Olkiewicz \cite{GOL}).
For further details about applications of O.U.P. we refer to \cite{BA}, \cite{BP} and the references therein.

The analysis of the distribution of the extremum consists in the study of the
random variable (r.v. for shorthand) defined as the maximum (or minimum) of a set of random
variables. 
The interest in the distribution of extremes goes back as far as applications of laws of chance to actuarial and insurance problems. The early theoretical work was done by R. Fisher and L. Tippett (1928) in \cite{FT}.
B. Gnedenko (1943) in \cite{GN} developed the theory to a high level by establishing practically all results. 
Nowadays, it is a well-studied feature in Probability and Statistics.
For a survey of the literature we recommend Section $2.11$ in \cite{GA}. 

The cut-off phenomenon was studied in the eighties to describe the phenomenon of abrupt convergence that appears in the Markov chain models of cards' shuffling, Ehrenfests' urn and random transpositions. It describes the property of steep convergence to an asymptotic distribution of certain stochastic processes. Very generally, a family of stochastic processes is said to have {\it{cut-off}} if its distance between the distribution at time $t$ and its limiting distribution comes abruptly from near its maximum to near zero.
For a precise definition see Definition \ref{defn1} below.
For more details about the stochastic models in which cut-off phenomenon occurs we refer to  \cite{BP}, \cite{PDIA} and the references therein.

In \cite{YCA} the author studied parallel Markov chains and proved cut-off phenomenon when the size of the sampling increases.
Following the spirit of \cite{BLY} and \cite{BLA} in which a cut-off phenomenon is shown to occur in a sample of $n$ O.U.P. and its average, we deal with the maximum of $n$ independent and identically distributed O.U.P. which a typical interesting quantity in Mathematical Finance and Extreme Theory.
Therefore, the proofs depend on applications of standard ideas in the theory
of extremes.

We are interested on the long-time behavior of the {\it{maximum}} of $n$ independent and identically distributed (i.i.d. for shorthand) O.U.P.. To be precise, let $X:=(X_t:t\geq 0)$ be the unique strong solution of the following stochastic differential equation
\begin{equation}\label{model}
\tag{{\bf{OU}}}
\left\{
\begin{array}{r@{\;=\;}l}
\ud X_t & -\lambda X_t\ud t+\ud L_t \qquad \textrm{ for any } t\geq 0, \\
X_0 & x_0,
\end{array}
\right.
\end{equation} 
where $\lambda$ is a positive constant,
$x_0$ is a deterministic initial datum on $\mathbb{R}$
and $L=(L_t:t\geq 0)$ denotes a one-dimensional L\'evy process.
Let $(\Omega,\mathcal{F},\mathbb{P})$ be the probability space in which $L$ is defined and denote by $\mathbb{E}$ the expectation with respect to $\mathbb{P}$.  

We assume that the characteristic function of the r.v. $L_1$ is given by
\[
\mathbb{E}\left[e^{{iz L_1}}\right]=\exp({-c|z|^{\alpha}})\qquad
\textrm{ for any } z\in \mathbb{R},
\]
where $c$ is a positive constant
and $\alpha\in (0,2]$.

For simplicity, for any $n\in \N$
denote by $[n]$ the set $\{1,2,\ldots,n\}$.
Let
\[
X^{(j)}:=\left(X^{(j)}_t:t\geq 0\right),\quad j\in [n]
\]
be i.i.d. O.U.P. according to \eqref{model}.

Our goal is the study of the so-called {\it{cut-off phenomenon}} in the total variation distance (t.v.d. for shorthand) when  $n\to \infty$ for the family
of continuous-time stochastic processes
indexed by $n\in \mathbb{N}$,
\[\left(\Z^{(n)}_t:=\max\limits_{j\in [n]}{X^{(j)}_t}:t\geq 0\right).\]

Using the asymptotic theory of extremes,
when $\alpha=2$ 
we prove that the t.v.d. between $\Z^{(n)}_t$ and its limiting distribution $\Z^{(n)}_\infty$  converges to a universal function
in a constant time window around the cut-off time, a fact known as profile cut-off in the context of stochastic processes.
On the other hand, when  $\alpha\in (0,2)$ we prove that the convergence is not abrupt.

The article is organized as follows. 
Section \ref{modelth} provides the definitions and the main results. 
Section \ref{proofs} is devoted to  the proofs of the main results.

\section{Main results}\label{modelth}
In this section we review the necessary background, and establish the main results and their consequences.
We start by introducing the basic definitions.

Given two probability measures $\nu_1$ and $\nu_2$ on a measurable space $(\Omega,\mathcal{F})$,
the t.v.d. between $\nu_1$ and $\nu_2$, $\tv(\nu_1,\nu_2)$, is given by
\[
\tv(\nu_1,\nu_2)=\sup\limits_{F\in \mathcal{F}(\mathbb{R})}|\nu_1(F)-\nu_2(F)|.
\]
When $X$ and $Y$ are r.v.s defined on the same probability space $(\Omega,\mathcal{F},\mathbb{P})$  and taking values on $\mathbb{R}$ for shorthand  we write
$\tv(X,Y)$ instead of $\tv(\mathcal{L}(X),\mathcal{L}(Y))$, where $\mathcal{L}(X)$ and $\mathcal{L}(Y)$ denote the distribution of $X$ and $Y$ under $\mathbb{P}$, respectively.

Two remarkable properties of the t.v.d. that we use along this article are {\it{translation}} and {\it{scaling}} invariance, i.e.,
\[
\tv(X+a,Y+a)=\tv(X,Y) \quad \textrm{ for any } a \in \mathbb{R}
\]
and 
\[
\tv(bX,bY)=\tv(X,Y) \quad \textrm{ for any } b \not=0.
\]
For details see Lemma A$.1$ in \cite{BP}.

Later on, we see that
$\Z^{(n)}_t$ converges in the t.v.d. to $\Z^{(n)}_\infty$ as $t$ goes by.
For any $n\in \mathbb{N}$ and $t>0$ let
\begin{equation}\label{toma}
\dd^{(n)}(t)
:=\tv
\left(\Z^{(n)}_t,\Z^{(n)}_\infty\right).
%\qquad\textrm{ for any } n\in \mathbb{N} \textrm{ and } t> 0.
\end{equation}
Notice that the above distance depends on the initial datum $x_0\in \mathbb{R}$ and $\alpha\in (0,2]$. To avoid cumbersome notation, 
{\it{we avoid its dependence from our notation.}}
For each $n\in \N$, let
\[\Z^{(n)}:=(\Z^{(n)}_t:t\geq 0).\]
According to \cite{BJ} and the references therein, the cut-off phenomenon can be expressed in three increasingly sharp levels as follows.
\begin{definition}\label{defn1}
The family $\left(\Z^{(n)}:n\in \N\right)$ has 
\begin{itemize}
\item[i)] {\it{cut-off}} 
at $(t^{(n)}:n\in \N)$ with cut-off time  $t^{(n)}$
if
$t^{(n)}\to \infty$ as $n \to \infty$ and
\[
\lim\limits_{n \rightarrow \infty}
\mathrm{{\bf{d}}}^{(n)}(\delta t^{(n)})=
\begin{cases}
1 \quad&\textrm{if}~ \delta\in(0,1),\\
0 &\textrm{if}~ \delta\in (1,\infty).
\end{cases}
\]
\item[ii)] 
{\it{window cut-off}} at $((t^{(n)},w^{(n)}): n\in \N)$
 with cut-off time $t^{(n)}$ 
and time window $w^{(n)}$
if
$t^{(n)}\to \infty$ as $n \to \infty$,
$\lim\limits_{n\to \infty}\frac{w^{(n)}}{t^{(n)}}=0$,
\[
\lim\limits_{b\rightarrow -\infty}\liminf\limits_{n \rightarrow \infty}\mathrm{{\bf{d}}}^{(n)}( t^{(n)}+bw^{(n)})=1
\]
and
\[
\lim\limits_{b\rightarrow \infty}\limsup\limits_{n \rightarrow \infty}\mathrm{{\bf{d}}}^{(n)}( t^{(n)}+bw^{(n)})=0. 
\]
\item[iii)] {\it{profile cut-off}} at 
$((t^{(n)},w^{(n)}):n\in \mathbb{N})$ 
with cut-off time $t^{(n)}$, 
time window $w^{(n)}$ and
profile function $G:\mathbb{R}\rightarrow [0,1]$
if 
$t^{(n)}\to \infty$ as $n \to \infty$,
$\lim\limits_{n\to \infty}\frac{w^{(n)}}{t^{(n)}}=0$,
\[
\lim\limits_{n \rightarrow \infty}\mathrm{{\bf{d}}}^{(n)}( t^{(n)}+bw^{(n)})=:G(b) \quad \textrm{ exists for any } b\in \mathbb{R}
\]
together with
 $\lim\limits_{b\rightarrow -\infty}G(b)=1$ and 
$\lim\limits_{b\rightarrow \infty}G(b)=0$.
\end{itemize}
\end{definition}

Bearing all this in mind
we provide a complete characterization of when cut-off occurs which is exactly the statements  of the following theorems.
The most interesting result is concerned when $\alpha=2$ (Gaussian case).
In that case, we prove profile cut-off with explicit cut-off time, window time and profile function.
As the following theorem states,
the profile function is given in terms of the Gumbel distribution.
Recall that a r.v. $\xi$ has  Gumbel distribution if its distribution function $F_{\xi}$ is given by $F_{\xi}(x)=e^{-e^{-x}}$ for any $x\in \mathbb{R}$.

\begin{theorem}[Gaussian case]
\label{th gaussiana}
Assume that $\alpha=2$.
For any $x_0\in \mathbb{R}$
the family of processes $(\Z^{(n)}:n\in \N)$ possesses profile cut-off in the t.v.d. as $n\to \infty$. 
The cut-off time is given by
\[
t^{(n)}:=\frac{1}{2\lambda }\ln(\ln(n))
\] 
and the time window
\[
w^{(n)}:=\kappa+\oo_n(1),
\]
where $\kappa$ is any positive constant and 
$\lim\limits_{n\to \infty}\oo_n(1)=0$.
Moreover, for any $b\in \mathbb{R}$ the limit
\[
\lim\limits_{n\to \infty}\mathrm{d}^{(n)}(t^{(n)}+bw^{(n)})=
\tv\left(2\sqrt{\lambda}e^{-\lambda\kappa b}x_0-e^{-2\lambda \kappa b}+\xi,\xi\right)
=:{G}(b)\quad \textrm{ exists, }
\]
where the r.v. $\xi$ has Gumbel distribution.
In addition, 
\[ G(-\infty)=1\quad \textrm{and} \quad G(\infty)=0.\]
\end{theorem}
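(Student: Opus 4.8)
The plan is to reduce everything to the classical extreme-value asymptotics for a sample of standard Gaussians and then to upgrade the convergence from the weak topology to the total variation topology by a density (Scheffé) argument, exploiting the scaling and translation invariance of $\tv$ recalled above.

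First I would solve \eqref{model} explicitly. Since $\alpha=2$ the driving noise is standard Brownian motion, so the mild solution $X^{(j)}_t=x_0e^{-\lambda t}+\int_0^t e^{-\lambda(t-s)}\,\ud L^{(j)}_s$ is Gaussian with mean $\mu_t:=x_0e^{-\lambda t}$ and variance $\sigma_t^2:=\sigma_\infty^2\bigl(1-e^{-2\lambda t}\bigr)$, where the stationary variance is $\sigma_\infty^2=1/(2\lambda)$. Writing $X^{(j)}_t=\mu_t+\sigma_t U_j$ with $U_1,\dots,U_n$ i.i.d. standard Gaussian, one obtains the exact identities $\Z^{(n)}_t\ee \mu_t+\sigma_t M_n$ and $\Z^{(n)}_\infty\ee \sigma_\infty M_n$, where $M_n:=\max_{j\in[n]}U_j$. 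Let $c_n:=\sqrt{2\ln n}$ and let $b_n:=\sqrt{2\ln n}-\tfrac{\ln\ln n+\ln(4\pi)}{2\sqrt{2\ln n}}$ be the usual second-order Gaussian normalising constant, so that $W_n:=c_n(M_n-b_n)$ converges weakly to the Gumbel variable $\xi$. Applying the affine bijection $x\mapsto c_n x/\sigma_\infty-c_nb_n$ to both laws and using translation and scaling invariance of $\tv$ gives
\[
\dd^{(n)}(t)=\tv\bigl(\alpha_n+\beta_n W_n,\,W_n\bigr),
\qquad
\beta_n:=\frac{\sigma_t}{\sigma_\infty},\quad
\alpha_n:=\frac{c_n\mu_t}{\sigma_\infty}+c_nb_n\Bigl(\frac{\sigma_t}{\sigma_\infty}-1\Bigr).
\]

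The key technical step is to show that $W_n\to\xi$ not merely weakly but in $\tv$. For this I would write the density of $W_n$ as $g_n(w)=\tfrac{n}{c_n}\varphi(b_n+w/c_n)\,\Phi(b_n+w/c_n)^{\,n-1}$, where $\varphi,\Phi$ denote the standard Gaussian density and distribution function, and prove the pointwise limit $g_n(w)\to e^{-w}e^{-e^{-w}}$ using the Mills-ratio asymptotics $1-\Phi(x)\sim \varphi(x)/x$ together with the standard fact $n\bigl(1-\Phi(b_n+w/c_n)\bigr)\to e^{-w}$. Since $g_n$ and the Gumbel density are genuine probability densities, Scheffé's lemma promotes this pointwise convergence to convergence in $L^1$, hence $\tv(W_n,\xi)\to0$.

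Next I would evaluate $\alpha_n$ and $\beta_n$ at $t=t^{(n)}+bw^{(n)}$. Since $e^{-2\lambda t^{(n)}}=1/\ln n$ and $w^{(n)}=\kappa+\oo_n(1)$, one has $e^{-2\lambda t}=(1+\oo_n(1))\,e^{-2\lambda\kappa b}/\ln n$ and $e^{-\lambda t}=(1+\oo_n(1))\,e^{-\lambda\kappa b}/\sqrt{\ln n}$. Hence $\beta_n=\sqrt{1-e^{-2\lambda t}}\to1$, while a second-order expansion using $c_nb_n=2\ln n-\tfrac12(\ln\ln n+\ln 4\pi)$ and $\sqrt{1-\epsilon}-1=-\epsilon/2+O(\epsilon^2)$ gives $c_nb_n(\beta_n-1)\to -e^{-2\lambda\kappa b}$ and $c_n\mu_t/\sigma_\infty\to 2\sqrt\lambda\,e^{-\lambda\kappa b}x_0$, so that $\alpha_n\to \mu(b):=2\sqrt\lambda e^{-\lambda\kappa b}x_0-e^{-2\lambda\kappa b}$. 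Combining with the previous step, the affine maps $x\mapsto\alpha_n+\beta_n x$ converge to $x\mapsto\mu(b)+x$; decomposing $\tv(\alpha_n+\beta_n W_n,\mu(b)+\xi)\le\tv(W_n,\xi)+\tv(\alpha_n+\beta_n\xi,\mu(b)+\xi)$ and applying Scheffé once more to the boundedness and continuity of the Gumbel density shows $\alpha_n+\beta_n W_n\to \mu(b)+\xi$ in $\tv$ as well. By the triangle inequality for $\tv$,
\[
\dd^{(n)}(t^{(n)}+bw^{(n)})=\tv\bigl(\alpha_n+\beta_n W_n,\,W_n\bigr)\longrightarrow \tv\bigl(\mu(b)+\xi,\xi\bigr)=G(b).
\]

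Finally I would verify the requirements of Definition \ref{defn1}(iii): $t^{(n)}\to\infty$ and $w^{(n)}/t^{(n)}\to0$ are immediate, and the boundary behaviour of $G$ follows from the shift $\mu(b)$. As $b\to\infty$ both exponentials vanish, so $\mu(b)\to0$ and $G(\infty)=\tv(\xi,\xi)=0$; as $b\to-\infty$ the term $-e^{-2\lambda\kappa b}$ dominates, so $\mu(b)\to-\infty$, and two Gumbel laws whose locations are separated by an unbounded amount satisfy $\tv\to1$, giving $G(-\infty)=1$. The main obstacle is the $\tv$ (rather than merely weak) convergence of the Gaussian maxima, which is precisely where the density estimate and Scheffé's lemma are needed; the remaining difficulty is the careful bookkeeping of the second-order constant $b_n$ required to produce the $-e^{-2\lambda\kappa b}$ term in the profile.
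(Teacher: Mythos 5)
Your proposal is correct and follows essentially the same route as the paper: reduction to the maximum of i.i.d.\ Gaussians, upgrading the Gumbel convergence to total variation via Scheff\'e's lemma (the paper's Lemma \ref{evd}), transferring between the true maximum and its Gumbel surrogate by the triangle inequality and the translation/scaling invariance of $\tv$ (the paper's Lemmas \ref{coupling} and \ref{lemma equi}), and the same second-order expansion of the normalizing constants at $t^{(n)}+bw^{(n)}$ producing the $-e^{-2\lambda\kappa b}$ shift. The only cosmetic difference is that you normalize $L$ to standard Brownian motion (i.e.\ $c=\nicefrac{1}{2}$), which matches the constant $2\sqrt{\lambda}$ in the statement, whereas the paper's proof keeps $c$ general and obtains $(\nicefrac{2\lambda}{c})^{\nicefrac{1}{2}}$.
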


On the other hand, in the heavy-tailed case there is not cut-off as the following theorem states.

\begin{theorem}[Strictly stable case]
\label{th stable}
Assume that $\alpha\in (0,2)$.
For any $x_0\in \mathbb{R}$ and for any sequence $(t^{(n)}:n\in \N)$ such that
$t^{(n)}\to \infty$ 
as $n\to \infty$
we have
\[
\lim\limits_{n\to \infty}\mathrm{d}^{(n)}(t^{(n)})=0.
\]
In particular, 
the family of processes $(\Z^{(n)}:n\in \N)$ does not exhibit cut-off in the t.v.d. as 
$n\to \infty$. 
\end{theorem}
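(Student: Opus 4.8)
The plan is to reduce the whole statement to a single scaling identity for the maximum and then to the total‑variation convergence of a normalized extremal sample to a Fréchet law. First I would solve \eqref{model} explicitly, writing $X^{(j)}_t=x_0e^{-\lambda t}+Y^{(j)}_t$ with $Y^{(j)}_t=\int_0^t e^{-\lambda(t-s)}\,\ud L^{(j)}_s$. Computing the characteristic function shows that $Y^{(j)}_t$ is symmetric $\alpha$-stable with scale $\sigma_t=\left(\tfrac{c(1-e^{-\alpha\lambda t})}{\alpha\lambda}\right)^{1/\alpha}$, while $Y^{(j)}_\infty$ is symmetric $\alpha$-stable with scale $\sigma_\infty=\left(\tfrac{c}{\alpha\lambda}\right)^{1/\alpha}$. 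Realizing all of them on a common standard symmetric $\alpha$-stable sample $(S^{(j)})_{j\in[n]}$ via $Y^{(j)}_t=\sigma_t S^{(j)}$ gives, with $r_t:=\sigma_t/\sigma_\infty=(1-e^{-\alpha\lambda t})^{1/\alpha}$, the exact identity $\max_j Y^{(j)}_t=r_t\max_j Y^{(j)}_\infty$. Since $\Z^{(n)}_\infty\ee\max_j Y^{(j)}_\infty$, this yields $\Z^{(n)}_t\ee x_0e^{-\lambda t}+r_t\,\Z^{(n)}_\infty$.

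Next I would introduce the Fréchet normalization. The tail of a symmetric $\alpha$-stable law is regularly varying of index $-\alpha$, so $Y^{(j)}_\infty$ lies in the domain of attraction of the Fréchet law $W$ with $\mathbb{P}(W\le x)=e^{-x^{-\alpha}}$ for $x>0$; setting $b_n:=(C_\alpha\sigma_\infty^\alpha n)^{1/\alpha}$ and $W_n:=\Z^{(n)}_\infty/b_n$, one has $W_n\to W$ in distribution. Writing $a_n:=x_0e^{-\lambda t^{(n)}}/b_n\to0$ and $r_n:=r_{t^{(n)}}\to1$, the scaling and translation invariance of $\tv$ give $\dd^{(n)}(t^{(n)})=\tv(a_n+r_n W_n,\,W_n)$. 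Inserting the intermediate laws of $a_n+r_nW$ and of $W$, and using $\tv(a_n+r_nW_n,a_n+r_nW)=\tv(W_n,W)$, the triangle inequality yields $\tv(a_n+r_nW_n,W_n)\le 2\,\tv(W_n,W)+\tv(a_n+r_nW,W)$. Thus the theorem follows from two facts: (a) $\tv(W_n,W)\to0$, and (b) $\tv(a_n+r_nW,W)\to0$ as $(a_n,r_n)\to(0,1)$.

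Fact (b) is routine: $W$ is absolutely continuous, and the $L^1$-continuity at $(0,1)$ of the map $(a,r)\mapsto \tfrac1r\,g\!\left(\tfrac{\,\cdot\,-a}{r}\right)$, where $g$ is the Fréchet density, gives $\tv(a_n+r_nW,W)\to0$. The main obstacle is Fact (a): upgrading the classical weak convergence $W_n\to W$ to convergence in total variation. For this I would use the smoothness and the tail expansion of the symmetric $\alpha$-stable density $f$, namely $\overline F(x)\sim C_\alpha\sigma_\infty^\alpha x^{-\alpha}$ and $f(x)\sim \alpha C_\alpha\sigma_\infty^\alpha x^{-\alpha-1}$ as $x\to\infty$, to show that the density $y\mapsto n b_n F(b_ny)^{n-1}f(b_ny)$ of $W_n$ converges pointwise on $(0,\infty)$ to the Fréchet density $g(y)=\alpha y^{-\alpha-1}e^{-y^{-\alpha}}$; since these are probability densities, Scheff\'e's lemma then promotes pointwise convergence to $L^1$-convergence, i.e.\ $\tv(W_n,W)\to0$. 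This density/local‑limit step is the technical heart of the argument.

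Finally I would highlight the mechanism behind the absence of cut-off: after normalization, the discrepancy between $\Z^{(n)}_t$ and $\Z^{(n)}_\infty$ is governed by $a_n\to0$ and by $1-r_n\asymp e^{-\alpha\lambda t^{(n)}}$, both of which vanish as soon as $t^{(n)}\to\infty$, with no amplification by a factor growing in $n$. This contrasts sharply with the Gaussian case, where the concentration scale $1/\sqrt{\ln n}$ of the maximum turns the relevant quantity into $e^{-2\lambda t}\ln n$ and thereby forces the threshold $t^{(n)}=\tfrac{1}{2\lambda}\ln\ln n$ of Theorem \ref{th gaussiana}.
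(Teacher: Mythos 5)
Your proposal is correct and follows essentially the same route as the paper: the scaling identity $\Z^{(n)}_t \ee x_0e^{-\lambda t}+r_t\,\Z^{(n)}_\infty$ is the paper's relation \eqref{reduc}, your Fact (a) (total-variation convergence of the normalized maximum to the Fr\'echet law via pointwise density convergence, the tail asymptotics of \cite{FN}, and Scheff\'e's lemma) is exactly Lemma \ref{evd}~ii), your triangle-inequality bound $\tv(a_n+r_nW_n,W_n)\le 2\,\tv(W_n,W)+\tv(a_n+r_nW,W)$ is the content of Lemma \ref{coupling} and Lemma \ref{lemma equi}, and your Fact (b) is the paper's closing Scheff\'e argument. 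The only differences are cosmetic (you phrase the reduction as an almost-sure coupling rather than an equality in distribution, and you call the limit law Fr\'echet where the paper says Pareto), so no changes are needed.
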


The minimum of a set of i.i.d. random variables can be recovered from its maximum as follows
\[
-\max\limits_{j\in [n]}\left(-X^{(j)}_t \right)=\min\limits_{j\in [n]} X^{(j)}_t
\quad \textrm{ for any } t\geq 0 \textrm{ and } n\in \mathbb{N}.
\]
As consequences  we have the following corollaries.

\begin{corollary}
Assume that $\alpha=2$.
For any $x_0\in \mathbb{R}$  
the family of processes 
\[
\left(\Xi^{(n)}:=\left(\min\limits_{j\in [n]} X^{(j)}_t:t\geq 0\right):n\in \N\right)
\] possesses profile cut-off in the t.v.d. as $n\to \infty$.
The cut-off time is given by
\[
t^{(n)}:=\frac{1}{2\lambda }\ln(\ln(n))
\] 
and the time window
\[
w^{(n)}:=\kappa+\oo_n(1),
\]
where $\kappa$ is any positive constant and 
$\lim\limits_{n\to \infty}\oo_n(1)=0$.
Moreover, for any $b\in \mathbb{R}$ the limit
\[
\begin{split}
\lim\limits_{n\to \infty}\tv\left(\min\limits_{j\in [n]} 
X^{(j)}_{t^{(n)}+bw^{(n)}},
\min\limits_{j\in [n]} X^{(j)}_\infty\right)=
\tv\left(2\sqrt{\lambda}e^{-\lambda\kappa b}x_0+e^{-2\lambda \kappa b}+\xi,\xi\right)
%=:{G}(b)~ \textrm{ exists, }
\end{split}
\]
exists and it is called $G(b)$,
where the r.v. $\xi$ has Gumbel distribution.
In addition, 
\[ G(-\infty)=1\quad \textrm{and} \quad G(\infty)=0.\]
\end{corollary}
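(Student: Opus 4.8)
The plan is to deduce the corollary directly from Theorem \ref{th gaussiana} by means of the reflection identity $\min_{j\in[n]}X^{(j)}_t=-\max_{j\in[n]}\bigl(-X^{(j)}_t\bigr)$ recorded just above the statement, together with the translation and scaling invariance of the t.v.d. First I would set $\tilde X^{(j)}:=-X^{(j)}$ for each $j\in[n]$. Writing $Y_t:=-X_t$ in \eqref{model}, one checks that $\ud Y_t=-\lambda Y_t\,\ud t+\ud(-L_t)$ with $Y_0=-x_0$; since in the Gaussian case the characteristic function $\exp(-c|z|^2)$ is even, the driving process satisfies $-L\ee L$, so each $\tilde X^{(j)}$ is again an O.U.P. of the form \eqref{model}, driven by a L\'evy process with the same law, the only change being that the initial datum is now $-x_0$. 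Consequently $\tilde Z^{(n)}_t:=\max_{j\in[n]}\tilde X^{(j)}_t$ is exactly the maximum process to which Theorem \ref{th gaussiana} applies, with $x_0$ replaced by $-x_0$.

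Next I would transfer the distance. By the reflection identity, $\min_{j\in[n]}X^{(j)}_t=-\tilde Z^{(n)}_t$ and, passing to the limit in $t$, $\min_{j\in[n]}X^{(j)}_\infty=-\tilde Z^{(n)}_\infty$. Applying scaling invariance with the factor $-1$ then yields
\[
\tv\Bigl(\min_{j\in[n]}X^{(j)}_t,\ \min_{j\in[n]}X^{(j)}_\infty\Bigr)
=\tv\bigl(-\tilde Z^{(n)}_t,\ -\tilde Z^{(n)}_\infty\bigr)
=\tv\bigl(\tilde Z^{(n)}_t,\ \tilde Z^{(n)}_\infty\bigr).
\]
In particular the minimum process inherits, verbatim, the cut-off time $t^{(n)}=\frac{1}{2\lambda}\ln(\ln(n))$ and the window $w^{(n)}=\kappa+\oo_n(1)$ of Theorem \ref{th gaussiana}, since those quantities depend only on $\lambda$ and on the law of $L$ and not on the initial datum; this already establishes profile cut-off for $\bigl(\Xi^{(n)}:n\in\N\bigr)$ with the stated time and window.

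It then remains to identify the profile. Feeding the initial datum $-x_0$ into the conclusion of Theorem \ref{th gaussiana} gives
\[
\lim_{n\to\infty}\tv\bigl(\tilde Z^{(n)}_{t^{(n)}+bw^{(n)}},\ \tilde Z^{(n)}_\infty\bigr)
=\tv\bigl(-2\sqrt{\lambda}\,e^{-\lambda\kappa b}x_0-e^{-2\lambda\kappa b}+\xi,\ \xi\bigr),
\]
with $\xi$ Gumbel. To recast this in the asserted form I would use scaling invariance once more with the factor $-1$ inside the t.v.d.: writing $s:=2\sqrt{\lambda}\,e^{-\lambda\kappa b}x_0+e^{-2\lambda\kappa b}$ and $\eta:=-\xi$,
\[
\tv\bigl(-s+\xi,\ \xi\bigr)=\tv\bigl(s-\xi,\ -\xi\bigr)=\tv\bigl(s+\eta,\ \eta\bigr),
\]
so that $G(b)=\tv\bigl(2\sqrt{\lambda}\,e^{-\lambda\kappa b}x_0+e^{-2\lambda\kappa b}+\eta,\ \eta\bigr)$, where $\eta$ is the reflected Gumbel law that is the natural limit for the minimum. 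The boundary values $G(-\infty)=1$ and $G(\infty)=0$ are then immediate from the corresponding limits in Theorem \ref{th gaussiana}, because the reflection leaves the t.v.d. unchanged.

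I expect the only delicate point to be the sign bookkeeping in the last step. The reflection simultaneously sends $x_0\mapsto-x_0$ in the drift contribution and flips the Gumbel limit to its reflection, and it is precisely the combination of these two effects, mediated by scaling invariance, that makes the variance contribution reappear with the opposite sign $+e^{-2\lambda\kappa b}$ (rather than the $-e^{-2\lambda\kappa b}$ of the maximum) while the linear term in $x_0$ keeps its sign. Everything else is a direct application of Theorem \ref{th gaussiana} and of the two invariance properties of the t.v.d. recorded in Section \ref{modelth}.
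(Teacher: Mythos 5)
Your proposal is correct and follows essentially the same route the paper intends: the corollary is stated there as an immediate consequence of Theorem \ref{th gaussiana} via the reflection identity $\min_{j\in[n]}X^{(j)}_t=-\max_{j\in[n]}(-X^{(j)}_t)$, the symmetry $-L\ee L$ of the Gaussian driver (so that $-X^{(j)}$ is again an O.U.P.\ with initial datum $-x_0$), and the invariance properties of the t.v.d., which is exactly your argument. The only loose end is cosmetic: your final profile is expressed with the reflected Gumbel $\eta=-\xi$, whereas the corollary states it with the Gumbel $\xi$ itself; the two expressions coincide, since with $s:=2\sqrt{\lambda}e^{-\lambda\kappa b}x_0+e^{-2\lambda\kappa b}$ one has $\tv(s+\eta,\eta)=\tv(\xi-s,\xi)=\tv(\xi,\xi+s)=\tv(s+\xi,\xi)$ by scaling by $-1$, translation invariance, and symmetry of the t.v.d., so no change to your argument is needed.
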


\begin{corollary}
Assume that $\alpha\in (0,2)$. 
For any $x_0\in \mathbb{R}$
the family of processes 
\[
\left(\Xi^{(n)}:=\left(\min\limits_{j\in [n]} X^{(j)}_t:t\geq 0\right):n\in \N\right)
\] does not exhibit cut-off in the t.v.d. as $n\to \infty$.
\end{corollary}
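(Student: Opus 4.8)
The plan is to deduce the statement from Theorem~\ref{th stable} for the maximum by reflecting each process through the origin and exploiting the symmetry of the driving noise. First I would record the pointwise identity already noted in the excerpt,
\[
\min\limits_{j\in [n]} X^{(j)}_t = -\max\limits_{j\in [n]}\left(-X^{(j)}_t\right)\qquad \textrm{for any } t\geq 0,
\]
and introduce $Y^{(j)}:=-X^{(j)}$ for each $j\in[n]$, together with its maximum $\Z^{(n)}_t[Y]:=\max_{j\in [n]} Y^{(j)}_t$.

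Next I would check that each $Y^{(j)}$ is itself an O.U.P. of the type \eqref{model} with the same index $\alpha$. Indeed $\ud Y^{(j)}_t=-\lambda Y^{(j)}_t\,\ud t-\ud L^{(j)}_t$ with $Y^{(j)}_0=-x_0$, and the process $-L^{(j)}$ is a L\'evy process whose value at time $1$ satisfies $\E[e^{iz(-L^{(j)}_1)}]=\E[e^{-izL^{(j)}_1}]=\exp(-c\abs{z}^\alpha)$, because the characteristic exponent $c\abs{z}^\alpha$ is real and even. Hence $-L^{(j)}$ has the same law as $L^{(j)}$, so $(Y^{(j)}:j\in[n])$ is a sampling of $n$ i.i.d. O.U.P. driven by $\alpha$-stable processes with index $\alpha\in(0,2)$ and initial datum $-x_0$. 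The same reflection identifies the limiting (stationary) laws, namely $\min_{j\in[n]}X^{(j)}_\infty \ee -\Z^{(n)}_\infty[Y]$.

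Then I would transfer the total variation distance using its scaling invariance under multiplication by $-1$:
\[
\tv\left(\min\limits_{j\in [n]} X^{(j)}_t,\ \min\limits_{j\in [n]} X^{(j)}_\infty\right)
=\tv\left(-\Z^{(n)}_t[Y],\ -\Z^{(n)}_\infty[Y]\right)
=\tv\left(\Z^{(n)}_t[Y],\ \Z^{(n)}_\infty[Y]\right).
\]
Since the family $(Y^{(j)}:j\in[n])$ satisfies every hypothesis of Theorem~\ref{th stable}---only the initial datum has changed from $x_0$ to $-x_0$, and that theorem holds for every real initial datum---the right-hand side tends to $0$ along any sequence $t^{(n)}\to\infty$. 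Therefore so does the left-hand side, which is exactly the assertion that no choice of times produces abrupt convergence; hence $(\Xi^{(n)}:n\in\N)$ does not exhibit cut-off.

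The argument carries no genuine obstacle: the whole content is the reduction, and the only point needing care is confirming that reflecting $X^{(j)}$ keeps the driving noise inside the same symmetric $\alpha$-stable class, which is precisely where the evenness of $c\abs{z}^\alpha$ enters. Once this is secured, Theorem~\ref{th stable} together with the scaling invariance of the t.v.d. yields the claim at once.
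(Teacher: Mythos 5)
Your proof is correct and follows essentially the same route the paper intends: the reflection identity $\min_{j\in[n]}X^{(j)}_t=-\max_{j\in[n]}(-X^{(j)}_t)$, the observation that $-X^{(j)}$ is again an O.U.P. of type \eqref{model} with initial datum $-x_0$ (since the symmetric characteristic function $\exp(-c\abs{z}^{\alpha})$ makes $-L\ee L$), the scaling invariance of the t.v.d. under multiplication by $-1$, and an application of Theorem \ref{th stable} with the arbitrary initial datum $-x_0$. The paper leaves exactly this reduction implicit when it states the corollary as a consequence of the reflection identity, so there is nothing further to compare.
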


\section{Proofs of the main theorems}\label{proofs}
Along this section, equality in distribution is denoted by $\ee$. Let $\alpha\in (0,2]$ and $t\geq 0$. Denote by $(X_t:t\geq 0)$ the solution of \eqref{model}.
The characteristic function of the r.v. $X_t$ can be computed explicitly as follows
\[
\E\left[e^{izX_t}\right]=
\exp\left(ie^{-\lambda t}x_0z-\frac{c(1-e^{-\lambda \alpha t})}{\lambda \alpha}|z|^\alpha\right)\qquad \textrm{ for any } z\in \mathbb{R},
\]
see for instance Lemma $17.1$ in \cite{SA}.
Then
\begin{equation}\label{eq distri}
X_t\ee e^{-\lambda t}x_0+\left(\frac{1-e^{-\lambda \alpha t}}{\lambda \alpha}\right)^{\nicefrac{1}{\alpha}}L_1\qquad \textrm{ for any } t\geq 0.
\end{equation}
Therefore, the r.v. $X_t$ converges in distribution to a r.v. $X_\infty$
as $t\to \infty$,
where 
\begin{equation}\label{eq limit}
X_\infty\ee \left(\frac{1}{\lambda \alpha}\right)^{\nicefrac{1}{\alpha}}L_1.
\end{equation}
Observe that the r.v. $L_1$ has an infinitely differentiable density with respect to the Lebesgue measure on $\mathbb{R}$ (see for instance Proposition $28.1$ in \cite{SA}). 
The latter together with the celebrated Scheff\'e Lemma imply that 
$X^{}_t$ converges in the t.v.d.  to $X^{}_\infty$ as $t$ goes by.

Let $\Z^{(n)}_\infty$ be a r.v. such that
\[
\mathcal{Z}^{(n)}_\infty\ee \max\limits_{j\in [n]}X^{(j)}_\infty.
\]
Since the t.v.d. decreases under mappings,
Theorem $5.2$ in \cite{DL} implies
\[
\tv\left(\Z^{(n)}_t,\Z^{(n)}_\infty\right)\leq 
\tv\left(\left(X^{(1)}_t,\ldots,X^{(n)}_t\right),
\left(X^{(1)}_\infty,\ldots,X^{(n)}_\infty\right)
\right)
\]
for any  $t\geq 0$.
Recall that  $X^{(1)},X^{(2)},\ldots, X^{(n)}$ are i.i.d. processes
 then
\[
 \tv\left(\left(X^{(1)}_t,\ldots,X^{(n)}_t\right),
\left(X^{(1)}_\infty,\ldots,X^{(n)}_\infty\right)
\right)\leq 
%\sum_{j\in [n]}\tv\left(X^{(j)}_t,X^{(j)}_\infty\right)
  n\tv\left(X^{(1)}_t,X^{(1)}_\infty\right)
\]
for any $t\geq 0$ (see for instance ($4.4$)-($4.5$) in \cite{HW} for further details). Consequently, 
for each $n\in \N$,
$\Z^{(n)}_t$ converges in the t.v.d.  to $\Z^{(n)}_\infty$ as $t\to \infty$.

Recall that
\[
\dd^{(n)}(t)=\tv\left(\Z^{(n)}_t,\Z^{(n)}_\infty\right )\qquad \textrm{ for any } t\geq 0.
\]
From relation \eqref{eq distri} and relation \eqref{eq limit} we deduce 
\begin{eqnarray*}
\Z^{(n)}_t& \ee &e^{-\lambda t}x_0+
\left(\frac{1-e^{-\lambda \alpha t}}{\lambda \alpha}\right)^{\nicefrac{1}{\alpha}}\max\limits_{j\in [n]}
L^{(j)}_1 \qquad \textrm{ for any } t\geq 0,\\
\Z^{(n)}_\infty &\ee &
\left(\frac{1}{\lambda \alpha}\right)^{\nicefrac{1}{\alpha}}\max\limits_{j\in [n]}
L^{(j)}_1.
\end{eqnarray*}
Hence, for any  $t\geq 0$
\begin{equation}\label{reduc}
\dd^{(n)}(t)=\tv\left(e^{-\lambda t}x_0+
\left(\frac{1-e^{-\lambda \alpha t}}{\lambda \alpha}\right)^{\nicefrac{1}{\alpha}}\zeta^{(n)},\left(\frac{1}{\lambda \alpha}\right)^{\nicefrac{1}{\alpha}}\zeta^{(n)}\right),
\end{equation}
where
$\zeta^{(n)}:=\max\limits_{j\in [n]}
L^{(j)}_1$.

The next lemma is our main tool for proving cut-off or no cut-off. It provides the local central limit theorem for the sequence of r.v.s $(\zeta^{(n)}:n\in \mathbb{N})$ as $n$ increases.

\begin{lemma}\label{evd}
There exist  a sequence $(a_n:n\in \N)$ of real numbers, a sequence of positive numbers
$(b_n: n\in \N)$  and a r.v. $\xi$ with absolutely continuous distribution such that
\[
\lim\limits_{n \to \infty}\tv\left(\frac{\zeta^{(n)}-a_n}{b_n},\xi\right)=0.
\]
In addition,
\begin{itemize}
\item[i)] If $\alpha=2$, the sequences $(a_n:n\in \N)$ and $(b_n:n\in \N)$
 can be taken as 
 \[
 a_n=\sqrt{2c}\left((2\ln(n))^{\nicefrac{1}{2}}-\frac{\ln(\ln(n))+\ln(4\pi)}
 {2(2\ln(n))^{\nicefrac{1}{2}}}\right),
 \qquad
 b_n=\sqrt{2c}(2\ln(n))^{-\nicefrac{1}{2}}
 \]
for $n\geq 2$, and the r.v. $\xi$ has  Gumbel distribution function  $F_{\xi}(x)=e^{-e^{-x}}$ for any $x\in \mathbb{R}$.
\item[ii) ]If $\alpha\in (0,2)$, the sequences $(a_n:n\in \N)$ and $(b_n:n\in \N)$ can be taken as 
\[a_n=0,\qquad b_n=(cC_\alpha n)^{\nicefrac{1}{\alpha}}\quad \textrm{ for } n\geq 2,\]
where $C_\alpha=\nicefrac{\sin(\frac{\pi \alpha}{2})\Gamma(\alpha)}{\pi}$, and the r.v. $\xi$ has Pareto distribution function
\[
F_{\xi}(x)=\begin{cases}
e^{-x^{-\alpha}} & \textrm{if}~ x>0,\\
0 & \textrm{if}~ x\leq 0.
\end{cases}
\]  
\end{itemize}
\end{lemma}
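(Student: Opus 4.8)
The plan is to prove this local limit theorem by upgrading the classical \emph{distributional} extreme-value convergence to convergence of \emph{densities}, and then to invoke Scheff\'e's Lemma to pass to the total variation distance. Denote by $F$ and $f$ the distribution function and the density of the r.v. $L_1$; recall from Proposition $28.1$ in \cite{SA} that $f$ exists and is infinitely differentiable. Since $\zeta^{(n)}=\max_{j\in[n]}L^{(j)}_1$ is a maximum of i.i.d. copies, its distribution function is $F^n$ and its density is $nF^{n-1}f$. Consequently the normalized variable $(\zeta^{(n)}-a_n)/b_n$ has density
\[
g_n(w)=n\,b_n\,F(a_n+b_nw)^{n-1}f(a_n+b_nw),\qquad w\in\mathbb{R}.
\]
By Scheff\'e's Lemma it suffices to show that $g_n(w)\to g(w)$ for Lebesgue-almost every $w$, where $g$ is the density of the target law $\xi$ (Gumbel in case i), Pareto in case ii)); indeed, pointwise convergence of probability densities to a probability density forces $L^1$ convergence, hence convergence in the t.v.d.

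First I would reduce the density asymptotics to tail asymptotics. Writing $x_n(w):=a_n+b_nw$, one has $x_n(w)\to\infty$, and the factor $F(x_n(w))^{n-1}$ is controlled through $F(x_n(w))^{n}=\exp\big(n\ln(1-(1-F(x_n(w))))\big)$, which converges to $\exp\big(-\lim_n n(1-F(x_n(w)))\big)$ whenever $n(1-F(x_n(w)))$ has a finite limit. Thus the whole argument hinges on precise first- and second-order asymptotics of the upper tail $1-F$ together with the corresponding asymptotics of $f$.

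Case i), $\alpha=2$: the characteristic function $\exp(-cz^2)$ identifies $L_1$ as a centered Gaussian of variance $2c$, so $f(x)=(4\pi c)^{-\nicefrac{1}{2}}\exp(-x^2/(4c))$, and the Mills-ratio estimate gives $1-F(x)=\frac{f(x)}{x}(1+\oo(1))$ as $x\to\infty$. I would substitute the stated $a_n,b_n$ (equivalently, rescale the standard-normal normalizing constants by $\sqrt{2c}$); their $\ln(\ln(n))$ and $\ln(4\pi)$ correction terms are designed precisely so that $n(1-F(a_n+b_nw))\to e^{-w}$, and a parallel computation yields $n\,b_n\,f(a_n+b_nw)\to e^{-w}$, whence $g_n(w)\to e^{-w}e^{-e^{-w}}$, the Gumbel density. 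Case ii), $\alpha\in(0,2)$: here $L_1$ is symmetric $\alpha$-stable and the standard tail expansion for stable laws (see \cite{SA}) gives $1-F(x)\sim cC_\alpha x^{-\alpha}$ and $f(x)\sim \alpha cC_\alpha x^{-\alpha-1}$ as $x\to\infty$, with $C_\alpha=\sin(\frac{\pi\alpha}{2})\Gamma(\alpha)/\pi$. With $a_n=0$ and $b_n=(cC_\alpha n)^{\nicefrac{1}{\alpha}}$, for $w>0$ one obtains $n(1-F(b_nw))\to w^{-\alpha}$ and $n\,b_n\,f(b_nw)\to\alpha w^{-\alpha-1}$, so that $g_n(w)\to\alpha w^{-\alpha-1}e^{-w^{-\alpha}}$, the Pareto density, while $g_n(w)\to 0$ for $w<0$.

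The main obstacle is the Gaussian case: producing genuine pointwise convergence of the densities (rather than merely of the distribution functions) requires the sharp two-term Mills-ratio expansion and careful bookkeeping of the $\ln(\ln(n))$ correction, since a first-order tail estimate alone does not pin down the limiting Gumbel density. The stable case is more robust because the regularly varying tail propagates directly to the density, and the only delicate point there is verifying the exact constant $C_\alpha$.
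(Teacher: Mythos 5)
Your proposal is correct and follows essentially the same route as the paper's proof: factor the density of $(\zeta^{(n)}-a_n)/b_n$ as $n\,b_n\,F(a_n+b_nw)^{n-1}f(a_n+b_nw)$, show it converges pointwise to the Gumbel (resp.\ Pareto) density, and conclude via Scheff\'e's Lemma --- the only structural difference being that the paper simply cites Galambos for the convergence of the distribution-function factor, whereas you re-derive it from the tail asymptotics $n(1-F(a_n+b_nw))\to e^{-w}$ (resp.\ $w^{-\alpha}$). One minor slip: for a Gaussian of variance $2c$ the Mills ratio reads $1-F(x)=\frac{2c\,f(x)}{x}(1+\oo(1))$ rather than $\frac{f(x)}{x}(1+\oo(1))$, but this is harmless since, as you note, one can work with the standard normal after rescaling the normalizing constants by $\sqrt{2c}$.
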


\begin{remark}
The choice of normalizing sequences  is not unique.
For instance, in item i) of Lemma \ref{evd}
the most natural way to define normalizing sequences $(a_n:n\in \N)$ and $(b_n:n\in \N)$ is to let $b_n$ be the solution of the equation
\[
2\pi a^2_n\exp\left({\frac{a^2_n}{2c}}\right)=2cn^2
\]
and set $b_n=2ca^{-1}_n$, see \cite{PE} for further details. 
On the other hand,  in item ii) of Lemma \ref{evd}
one can also take as normalizing sequences
$(a_n:n\in \N)$ and $(b_n:n\in \N)$  
\[
a_n=0\quad \textrm{ and } \quad b_n=\inf\left\{x\in \mathbb{R}:1-F_{L_1}(x)\leq \frac{1}{n}\right\}
\]
for $n\geq 2$, where $F_{L_1}$ denotes the distribution function of the r.v. $L_1$,
see Theorem $2.1.1$ in \cite{GA}.
Since the tails of a stable distribution (not Gaussian)  are asymptotically equivalent to a Pareto distribution, using relation (2) in \cite{FN} one can verify that the  sequence $(b_n:n\in \N)$ can be also taken as 
\[b_n=(cC_\alpha n)^{\nicefrac{1}{\alpha}}\quad \textrm{ with } \quad  
C_\alpha=\nicefrac{\sin(\frac{\pi \alpha}{2})\Gamma(\alpha)}{\pi},\]
where $\Gamma$ denotes the Gamma function.
\end{remark}

In most of the references about asymptotic theory of extremes, the convergence takes place in the distribution sense and not in the t.v.d.. 
Since we are not given any information about the rate of convergence, in our setting the convergence also holds in the t.v.d.. 
To prove that the convergence is actually in the t.v.d., we recall that the distribution function of the r.v. $\zeta^{(n)}$, $F_{\zeta^{(n)}}$, is given by 
\[
F_{\zeta^{(n)}}(x)=(F_{L_1}(b_nx+a_n))^{n} \quad \textrm{ for any } x\in \mathbb{R},
\]
where $F_{L_1}$ is the distribution function of the r.v. $L_1$. Consequently, the density of the r.v. $\zeta^{(n)}$, $f_{\zeta^{(n)}}$, is given by 
\[
f_{\zeta^{(n)}}(x)=n(F_{L_1}(b_nx+a_n))^{n-1}
f_{L_1}(b_nx+a_n)b_n \quad \textrm{ for any } x\in \mathbb{R},
\]
where $f_{L_1}$  is the density of the r.v. $L_1$.

Recall that in the case of two r.v.s $X$ and $Y$ with densities $f_X$ and $f_Y$ respectively, one can deduce that
\[
\tv(X,Y)=\frac{1}{2}\int\limits_{\mathbb{R}}|f_X(z)-f_Y(z)|\ud z,
\]
for details see Lemma $3.3.1$ in \cite{Reiss}. Therefore, by the Scheff\'e Lemma we get that almost everywhere convergence of the densities of a sequence of r.v.s  implies convergence in the total variation distance.
Now, we prove Lemma \ref{evd}.

\begin{proof}
First we prove item i).
We know that the r.v. 
 $b^{-1}_n(\zeta^{(n)}-a_n)$ converges in distribution to the r.v. $\xi$ as $n\to \infty$,
see Section $2.3.2$ in \cite{GA}. 
Since 
$\lim\limits_{n \to \infty}a_n=\infty$ and $\lim\limits_{n\to \infty}b_n=0$ then 
$\lim\limits_{n\to \infty} F_{L_1}(b_nx+a_n)=1$.
Observe that $F_{L_1}(y)\in (0,1)$ for any $y\in \mathbb{R}$. Then
\[
\lim\limits_{n\to \infty}(F_{L_1}(b_nx+a_n))^{n-1}=
\lim\limits_{n\to \infty}\frac{(F_{L_1}(b_nx+a_n))^{n}}{F_{L_1}(b_nx+a_n)}
=
F_{\xi}(x)\quad \textrm{ for any } x\in \mathbb{R}.
\]
Since the r.v. $L_1$ has Gaussian distribution with zero mean and variance $2c$, a  straightforward computation also shows that
\[
\lim\limits_{n\to \infty}f_{L_1}(b_nx+a_n)nb_n=e^{-x}\quad \textrm{ for any } x\in \mathbb{R}.
\]
By a direct application of the Scheff\'e Lemma we conclude the statement.

Now, we prove item ii). 
Observe that $\lim\limits_{n\to \infty}b_n=\infty$ and that $F_{L_1}(y)\in (0,1)$ for any $y\in \mathbb{R}$. 
We claim that 
$\lim\limits_{n\to \infty} f_{L_1}(b_nx)nb_n=\frac{\alpha}{x^{1+\alpha}}$ for any $x\not =0$.
Indeed, it is well-known that
\[
\lim\limits_{n\to \infty} \frac{f_{L_1}(b_nx)}{
\frac{c\alpha C_\alpha}{b^{1+\alpha}_n x^{1+\alpha}}}=1 \quad \textrm{ for any } x\not=0,
\]
where $C_\alpha=\nicefrac{\sin(\frac{\pi \alpha}{2})\Gamma(\alpha)}{\pi}$, see for instance Section $2$ in \cite{FN}.
Since 
$b_n=(cC_\alpha n)^{\nicefrac{1}{\alpha}}$ then 
\[
\lim\limits_{n\to \infty} f_{L_1}(b_nx)nb_n=
\frac{\alpha}{x^{1+\alpha}}\quad \textrm{ for any } x\not=0.
\]

On the other hand, by applying Theorem $2.1.1$ in \cite{GA}
we have that the r.v.
 $b^{-1}_n \zeta^{(n)}$ converges in distribution to the r.v. $\xi$ as $n\to \infty$. 
 Then
\[
\lim\limits_{n\to \infty}(F_{L_1}(b_nx))^{n-1}=\begin{cases}
0 \;\;\;\;\;\;\;\;\;\;\;\textrm{ if } x\leq 0,\\
F_{\xi}(x)\;\;\;\;\textrm{ if } x>0. 
\end{cases}
\]
Therefore
\[
\lim\limits_{n\to \infty}f_{\zeta^{(n)}}(x)=
\begin{cases}
0 \;\;\;\;\;\;\;\;\;\;\;\;\;\textrm{ if } x\leq 0,\\
\frac{F_{\xi}(x)\alpha}{x^{1+\alpha}}\;\;\;\;\;\textrm{ if } x>0. 
\end{cases}
\]
By the Scheff\'e Lemma we conclude the statement.
\end{proof}

For the convenience of
computations we turn to study another distance as the following lemma states.

\begin{lemma}\label{coupling}
Let 
$(a_n:n\in \N)$ and $(b_n:n\in \N)$ be the sequences, and $\xi$ be the r.v. obtained in Lemma \ref{evd}.
Then for any $n\in \mathbb{N}$ and $t>0$ we have
\begin{equation}\label{cou}
\left |{\mathrm{d}^{(n)}}(t)-\mathrm{D}^{(n)}(t) \right |\leq 2
\tv\left(\frac{\zeta^{(n)}-a_n}{b_n},\xi\right),
\end{equation}
where 
\[
{\mathrm{D}^{(n)}}(t)=\tv\left(e^{-\lambda t}x_0+
\left(\frac{1-e^{-\lambda \alpha t}}{\lambda \alpha}\right)^{\nicefrac{1}{\alpha}}(b_n\xi+a_n),
\left(\frac{1}{\lambda \alpha}\right)^{\nicefrac{1}{\alpha}}(b_n\xi+a_n)
\right).
\]
\end{lemma}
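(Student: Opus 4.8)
The plan is to treat both quantities $\mathrm{d}^{(n)}(t)$ and $\mathrm{D}^{(n)}(t)$ as comparisons of two laws obtained by applying the \emph{same} pair of affine transformations to a single random variable: the variable $\zeta^{(n)}$ in the case of $\mathrm{d}^{(n)}(t)$, and the surrogate $b_n\xi+a_n$ in the case of $\mathrm{D}^{(n)}(t)$. To make this transparent I would set $A:=e^{-\lambda t}x_0$, $B:=\left(\frac{1-e^{-\lambda\alpha t}}{\lambda\alpha}\right)^{\nicefrac{1}{\alpha}}$ and $C:=\left(\frac{1}{\lambda\alpha}\right)^{\nicefrac{1}{\alpha}}$. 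Since $\lambda>0$, $\alpha\in(0,2]$ and $t>0$, both $B$ and $C$ are strictly positive. With this notation, relation \eqref{reduc} reads $\mathrm{d}^{(n)}(t)=\tv\!\left(A+B\zeta^{(n)},C\zeta^{(n)}\right)$, whereas by definition $\mathrm{D}^{(n)}(t)=\tv\!\left(A+B(b_n\xi+a_n),C(b_n\xi+a_n)\right)$, so the two expressions differ only in that $\zeta^{(n)}$ is replaced by $b_n\xi+a_n$.

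First I would isolate the single scalar that governs this replacement. Using the translation and scaling invariance of the t.v.d. recalled in Section \ref{modelth} and the fact that $b_n>0$,
\[
\tv\left(\zeta^{(n)},b_n\xi+a_n\right)=\tv\left(\zeta^{(n)}-a_n,b_n\xi\right)=\tv\left(\frac{\zeta^{(n)}-a_n}{b_n},\xi\right)=:\varepsilon_n.
\]
Next I would invoke the triangle inequality for the t.v.d., which is a genuine metric on probability measures, inserting the two intermediate laws $A+B(b_n\xi+a_n)$ and $C(b_n\xi+a_n)$:
\[
\mathrm{d}^{(n)}(t)\leq \tv\!\left(A+B\zeta^{(n)},A+B(b_n\xi+a_n)\right)+\mathrm{D}^{(n)}(t)+\tv\!\left(C(b_n\xi+a_n),C\zeta^{(n)}\right).
\]
Applying translation invariance to cancel $A$ and then scaling invariance to cancel the positive factor $B$, the first term equals $\varepsilon_n$; cancelling the positive factor $C$ in the same way, the third term also equals $\varepsilon_n$. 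This yields $\mathrm{d}^{(n)}(t)\leq \mathrm{D}^{(n)}(t)+2\varepsilon_n$.

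Finally, I would run the identical chain of inequalities with the roles of $\zeta^{(n)}$ and $b_n\xi+a_n$ interchanged, obtaining $\mathrm{D}^{(n)}(t)\leq \mathrm{d}^{(n)}(t)+2\varepsilon_n$; combining the two bounds gives \eqref{cou}. There is no substantial obstacle here, since the whole argument is bookkeeping with the metric and invariance properties of the t.v.d.. The only point requiring minor care is to verify that the multipliers $B$, $C$ and $b_n$ are strictly positive, so that scaling invariance may be applied with a nonzero factor; this holds for every $t>0$ and every admissible $\lambda$ and $\alpha$, which also explains why the statement is restricted to $t>0$.
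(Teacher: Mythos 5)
Your proposal is correct and follows essentially the same route as the paper: relation \eqref{reduc}, the triangle inequality applied in both directions around the intermediate laws $A+B(b_n\xi+a_n)$ and $C(b_n\xi+a_n)$, and then translation and scaling invariance to identify each error term with $\tv\bigl(\tfrac{\zeta^{(n)}-a_n}{b_n},\xi\bigr)$. The only difference is cosmetic: your $A$, $B$, $C$, $\varepsilon_n$ notation makes the symmetry of the two bounds more transparent than the paper's explicit displays.
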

\begin{proof}
Let $n\in \mathbb{N}$ and $t>0$.
From relation \eqref{reduc} we know that
\[
\dd^{(n)}(t)=\tv\left(e^{-\lambda t}x_0+
\left(\frac{1-e^{-\lambda \alpha t}}{\lambda \alpha}\right)^{\nicefrac{1}{\alpha}}\zeta^{(n)},\left(\frac{1}{\lambda \alpha}\right)^{\nicefrac{1}{\alpha}}\zeta^{(n)}\right).
\]
From the triangle inequality we deduce
\[
\begin{split}
\dd^{(n)}(t)& \leq  \\ 
&\hspace{-1cm}\tv\left(e^{-\lambda t}x_0+
\left(\frac{1-e^{-\lambda \alpha t}}{\lambda \alpha}\right)^{\nicefrac{1}{\alpha}}\zeta^{(n)},
e^{-\lambda t}x_0+
\left(\frac{1-e^{-\lambda \alpha t}}{\lambda \alpha}\right)^{\nicefrac{1}{\alpha}}(b_n\xi+a_n)
\right)\\
&\hspace{-1cm}+
\tv\left(e^{-\lambda t}x_0+
\left(\frac{1-e^{-\lambda \alpha t}}{\lambda \alpha}\right)^{\nicefrac{1}{\alpha}}(b_n\xi+a_n),
\left(\frac{1}{\lambda \alpha}\right)^{\nicefrac{1}{\alpha}}(b_n\xi+a_n)
\right)\\
&\hspace{-1cm}+\tv\left(\left(\frac{1}{\lambda \alpha}\right)^{\nicefrac{1}{\alpha}}(b_n\xi+a_n),
\left(\frac{1}{\lambda \alpha}\right)^{\nicefrac{1}{\alpha}}\zeta^{(n)}
\right).
\end{split}
\]
Then
\begin{equation}\label{inec2}
\begin{split}
\dd^{(n)}(t)& \leq  \\
&\hspace{-1cm}\tv\left(e^{-\lambda t}x_0+
\left(\frac{1-e^{-\lambda \alpha t}}{\lambda \alpha}\right)^{\nicefrac{1}{\alpha}}\zeta^{(n)},
e^{-\lambda t}x_0+
\left(\frac{1-e^{-\lambda \alpha t}}{\lambda \alpha}\right)^{\nicefrac{1}{\alpha}}(b_n\xi+a_n)
\right)\\
&\hspace{-1cm} +\D^{(n)}(t) +
\tv\left(\left(\frac{1}{\lambda \alpha}\right)^{\nicefrac{1}{\alpha}}(b_n\xi+a_n),
\left(\frac{1}{\lambda \alpha}\right)^{\nicefrac{1}{\alpha}}\zeta^{(n)}
\right).
\end{split}
\end{equation}
On the other hand, again from the triangle inequality we obtain
\[
\begin{split}
\D^{(n)}(t)&\leq \\
&\hspace{-1cm}\tv\left(e^{-\lambda t}x_0+
\left(\frac{1-e^{-\lambda \alpha t}}{\lambda \alpha}\right)^{\nicefrac{1}{\alpha}}(b_n\xi+a_n),
e^{-\lambda t}x_0+
\left(\frac{1-e^{-\lambda \alpha t}}{\lambda \alpha}\right)^{\nicefrac{1}{\alpha}}\zeta^{(n)}
\right)\\
&\hspace{-1cm}+
\tv\left(e^{-\lambda t}x_0+
\left(\frac{1-e^{-\lambda \alpha t}}{\lambda \alpha}\right)^{\nicefrac{1}{\alpha}}\zeta^{(n)},
\left(\frac{1}{\lambda \alpha}\right)^{\nicefrac{1}{\alpha}}\zeta^{(n)}
\right)\\
&\hspace{-1cm}+\tv\left(\left(\frac{1}{\lambda \alpha}\right)^{\nicefrac{1}{\alpha}}\zeta^{(n)},
\left(\frac{1}{\lambda \alpha}\right)^{\nicefrac{1}{\alpha}}(b_n \xi+a_n)
\right).
\end{split}
\]
 Then
\begin{equation}\label{inec1}
\begin{split}
\D^{(n)}(t)&\leq \\
&\hspace{-1cm}  \tv\left(e^{-\lambda t}x_0+
\left(\frac{1-e^{-\lambda \alpha t}}{\lambda \alpha}\right)^{\nicefrac{1}{\alpha}}(b_n\xi+a_n),
e^{-\lambda t}x_0+
\left(\frac{1-e^{-\lambda \alpha t}}{\lambda \alpha}\right)^{\nicefrac{1}{\alpha}}\zeta^{(n)}
\right)\\
&\hspace{-1cm}+
\dd^{(n)}(t)+
\tv\left(\left(\frac{1}{\lambda \alpha}\right)^{\nicefrac{1}{\alpha}}\zeta^{(n)},
\left(\frac{1}{\lambda \alpha}\right)^{\nicefrac{1}{\alpha}}(b_n \xi+a_n)
\right).
\end{split}
\end{equation}
Combining inequality \eqref{inec2} and inequality \eqref{inec1} and using the fact that
the t.v.d. is invariant by translation and by scaling 
%(see for instance Lemma A$.1$ in \cite{BJ})
 we deduce
\[
\left |\dd^{(n)}(t)-\D^{(n)}(t) \right |\leq 2
\tv\left(\frac{\zeta^{(n)}-a_n}{b_n},\xi\right).
%\quad
%\textrm{ for any } t\geq 0.
\]
\end{proof}

The following lemma implies that the distances $\dd^{(n)}$ and $\D^{(n)}$ are asymptotically equivalent. 

\begin{lemma}\label{lemma equi}
Let $(t^{(n)}:n\in \N)$ be a sequence such that $\lim\limits_{n\to \infty}t^{(n)}=\infty$. Then 
\[\liminf\limits_{n\to \infty} \mathrm{d}^{(n)}(t^{(n)})=\liminf\limits_{n\to \infty} \mathrm{D}^{(n)}(t^{(n)})\]
and
\[
\limsup\limits_{n\to \infty} \mathrm{d}^{(n)}(t^{(n)})=\limsup\limits_{n\to \infty} \mathrm{D}^{(n)}(t^{(n)}).\]
\end{lemma}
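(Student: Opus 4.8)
The plan is to read the result off directly from Lemmas \ref{evd} and \ref{coupling}, with only an elementary fact about $\liminf$ and $\limsup$ needed to finish. First I would note that since $t^{(n)}\to\infty$, we have $t^{(n)}>0$ for all $n$ large enough, so Lemma \ref{coupling} may be applied at $t=t^{(n)}$; this gives
\[
\left|\dd^{(n)}(t^{(n)})-\D^{(n)}(t^{(n)})\right|\leq 2\,\tv\!\left(\frac{\zeta^{(n)}-a_n}{b_n},\xi\right)
\]
for all sufficiently large $n$. The key point is that the right-hand side does not involve $t$ at all, so the bound is uniform over every admissible time sequence. By Lemma \ref{evd} this right-hand side converges to $0$, whence $\dd^{(n)}(t^{(n)})-\D^{(n)}(t^{(n)})\to 0$ as $n\to\infty$.

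The remaining step is the purely elementary observation that two real sequences whose difference tends to zero share identical inferior and superior limits. I would make this explicit by setting $\varepsilon_n:=\dd^{(n)}(t^{(n)})-\D^{(n)}(t^{(n)})$ and noting that for each $\delta>0$ there is an $N$ with $|\varepsilon_n|<\delta$ for $n\geq N$, so that
\[
\D^{(n)}(t^{(n)})-\delta\leq \dd^{(n)}(t^{(n)})\leq \D^{(n)}(t^{(n)})+\delta \qquad\textrm{for } n\geq N.
\]
Applying $\liminf\limits_{n\to\infty}$ (and separately $\limsup\limits_{n\to\infty}$) across these inequalities and then letting $\delta\downarrow 0$ yields the two claimed equalities.

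There is essentially no genuine obstacle here: all the analytic work has already been absorbed into Lemmas \ref{evd} and \ref{coupling}, and this lemma is merely the bookkeeping device that lets one replace the target quantity $\dd^{(n)}$ by the more explicit $\D^{(n)}$ in every subsequent asymptotic computation. If anything deserves a word of care, it is only the remark that the estimate from Lemma \ref{coupling} is uniform in the time variable, so that it can be evaluated along the arbitrary sequence $(t^{(n)}:n\in\N)$ without any loss.
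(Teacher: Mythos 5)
Your proposal is correct and follows exactly the route the paper intends: the paper's proof is the one-line remark that the result ``follows from Lemma \ref{evd} and Lemma \ref{coupling}'', and your argument---applying the $t$-uniform bound of Lemma \ref{coupling} along the sequence $(t^{(n)}:n\in\N)$, invoking Lemma \ref{evd} to make the bound vanish, and concluding via the elementary fact that sequences whose difference tends to zero have the same $\liminf$ and $\limsup$---is precisely the bookkeeping the paper leaves implicit.
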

\begin{proof}
The proofs follow from  Lemma \ref{evd} and Lemma \ref{coupling}.
\end{proof}

Since the right-hand side of inequality \eqref{cou} does not depend on $t$, therefore cut-off/windows cut-off/profile cut-off for the distance $\dd^{(n)}$ is {\it{equivalent}} for the distance $\D^{(n)}$, respectively. 

Now, we stress the fact that Theorem \ref{th gaussiana} and Theorem \ref{th stable} are just 
consequences of what we have proved up to here.

\subsection{Proof of Theorem \ref{th gaussiana}}
According to item ii) of Lemma \ref{evd}
the sequences $(a_n:n\in \N)$ and $(b_n:n\in \N)$
 can be taken as 
\begin{equation}\label{ght}
 a_n=\sqrt{2c}\left((2\ln(n))^{\nicefrac{1}{2}}-\frac{\ln(\ln(n))+\ln(4\pi)}
 {2(2\ln(n))^{\nicefrac{1}{2}}}\right)
\end{equation}
and
\begin{equation}\label{ght1}
b_n=\sqrt{2c}(2\ln(n))^{-\nicefrac{1}{2}}
\end{equation}
for $n\geq 2$.
Let $t> 0$ and recall that 
\[
\D^{(n)}(t)=\tv\left(e^{-\lambda t}x_0+
\left(\frac{1-e^{-2\lambda  t}}{2\lambda }\right)^{\nicefrac{1}{2}}(b_n\xi+a_n),
\left(\frac{1}{2 \lambda }\right)^{\nicefrac{1}{2}}(b_n\xi+a_n)
\right).
\]
Since the t.v.d. is invariant by translation and by scaling, we deduce
\[
\D^{(n)}(t)=\tv\left(\theta^{(n)}_t+
\left(1-e^{-2\lambda  t}\right)^{\nicefrac{1}{2}}\xi,
\xi
\right),
\]
where 
\[
\theta^{(n)}_t:=\frac{(2\lambda )^{\nicefrac{1}{2}}e^{-\lambda t}x_0}{b_n}-
\frac{a_n}{b_n}\left(1-\left({1-e^{-2\lambda  t}}\right)^{\nicefrac{1}{2}}\right)\quad \textrm{ for any } t>0 .
\]
Let $\varphi_t:=1-\left({1-e^{-2\lambda  t}}\right)^{\nicefrac{1}{2}}$, $t> 0$. A straightforward computation shows that
\[
\lim\limits_{t\to \infty}e^{2\lambda t}\varphi_t=\nicefrac{1}{2}.
\]
From relation \eqref{ght} and relation   
 \eqref{ght1} we obtain
\[
\theta^{(n)}_t=
(\nicefrac{2\lambda}{c} )^{\nicefrac{1}{2}}(\ln(n))^{\nicefrac{1}{2}}e^{-\lambda t}x_0-
e^{-2\lambda t}\left(\ln(n)-\frac{\ln(\ln(n))+\ln(4\pi)}{4}\right)2e^{2\lambda t}\varphi_t
\]
for any $t> 0$ and $n\geq 2$. Set
\[
t^{(n)}=\frac{1}{2\lambda }\ln(\ln(n))\quad
\textrm{ and} \quad
w^{(n)}=\kappa+\oo_n(1),
\] 
where $\kappa$ is any positive constant and 
$\lim\limits_{n\to \infty}\oo_n(1)=0$. Then
\[
\lim\limits_{n\to \infty}\theta^{(n)}_{t^{(n)}+bw^{(n)}}=(\nicefrac{2\lambda}{c} )^{\nicefrac{1}{2}}e^{-\lambda\kappa b}x_0-e^{-2\lambda \kappa b}\quad 
\textrm{ for any } b\in \mathbb{R}.
\]
Since the r.v. $\xi$ has continuous density  the Scheff\'e Lemma allows us to deduce
\[
\lim\limits_{n\to \infty}\D^{(n)}(t^{(n)}+bw^{(n)})=
\tv\left((\nicefrac{2\lambda}{c} )^{\nicefrac{1}{2}}e^{-\lambda\kappa b}x_0-e^{-2\lambda \kappa b}+\xi,\xi\right)
=:{G}(b)
\]
for any $b\in \mathbb{R}$. The latter together with Lemma \ref{lemma equi} imply
\[
\lim\limits_{n\to \infty}\dd^{(n)}(t^{(n)}+bw^{(n)})=
{G}(b)\quad 
\textrm{ for any } b\in \mathbb{R}.
\]
Moreover, again using the
Scheff\'e Lemma we obtain  $\lim\limits_{b\to \infty}G(b)=0$. By Lemma A$.3$ in \cite{BP} we also deduce $\lim\limits_{b\to -\infty}G(b)=1$ which completes the proof.
$\hfill\square$

\subsection{Proof of Theorem \ref{th stable}}
From item ii) of Lemma \ref{evd} we know that
$\lim\limits_{n\to \infty}b_n=\infty$ and $a_n=0$ for each $n\geq 2$. Then for any  $t\geq 0$
\[
\D^{(n)}(t)=\tv\left(e^{-\lambda t}x_0+
\left(\frac{1-e^{-\lambda \alpha t}}{\lambda \alpha}\right)^{\nicefrac{1}{\alpha}}b_n\xi,
\left(\frac{1}{\lambda \alpha}\right)^{\nicefrac{1}{\alpha}}b_n\xi
\right).
\]
Using the scale invariant property for the t.v.d. we obtain
\[
\D^{(n)}(t)=\tv\left(\frac{(\lambda \alpha)^{\nicefrac{1}{\alpha}}e^{-\lambda t}x_0}{b_n}+
\left({1-e^{-\lambda \alpha t}}\right)^{\nicefrac{1}{\alpha}}\xi,
\xi
\right)\quad \textrm{ for any } t\geq 0.
\] 
Let $(t^{(n)}:n\in \N)$ be any sequence such that $\lim\limits_{n \to \infty}t^{(n)}=\infty$. Observe that
\[
\lim\limits_{n \to \infty}\frac{e^{-\lambda t^{(n)}}}{b_n}=0 \quad \textrm{ and } \quad
\lim\limits_{n \to \infty}\left({1-e^{-\lambda \alpha t^{(n)}}}\right)^{\nicefrac{1}{\alpha}}=1. 
\]
Since the r.v. $\xi$ has continuous density 
the Scheff\'e Lemma implies that 
\[
\lim\limits_{n \to \infty}\D^{(n)}(t^{(n)})=0.
\]
Lemma \ref{lemma equi} allows us to deduce
$
\lim\limits_{n \to \infty}\dd^{(n)}(t^{(n)})=0
$
which implies the statement.
$\hfill\square$

\section*{Acknowledgments}
G. Barrera 
gratefully acknowledges support from a post-doctorate 
Pacific Ins\-titute for the Mathematical Sciences (PIMS, $2017$-$2019$) grant held
at the Department of Mathematical and Statistical Sciences at University of Alberta. He also would like to express his gratitude to University of Alberta and University of Helsinki for all the facilities used along the realization of this work.

\bibliographystyle{amsplain}

\end{document}